\documentclass[11pt,a4paper]{article} 
\usepackage[utf8]{inputenc} 
\usepackage{amsmath, amssymb, amsthm, mathrsfs} 
\usepackage{geometry} 
\usepackage{hyperref} 
\newtheorem{theorem}{Theorem}[section] 
\newtheorem{lemma}[theorem]{Lemma} 
\newtheorem{definition}[theorem]{Definition} 
 
\newtheorem{proposition}[theorem]{Proposition}

\title{Algebraic Structure and Orthogonality Properties of Rational Polynomial Phase Rings on Discrete Lattices}

\author{Terence R. Smith} 
\date{\today}

\begin{document} 
\maketitle 

\begin{abstract} 
We formalize the algebraic structure of rational polynomial phase sequences (discrete chirps) over the integer lattice $\mathbb{Z}$. By constructing a convolution algebra generated by polynomial phases with coefficients in $\mathbb{Q}$, we establish a rigorous operational calculus for discrete difference equations. We prove an exact orthogonality relation for these sequences over finite periodic windows, yielding a generalized discrete Fourier isometry. Furthermore, we demonstrate that the continuum limit of this discrete framework can be rigorously defined via the inductive limit of finite-dimensional $C^*$-algebras (an AF-algebra). This framework provides a rigorous algebraic substrate for finite-mode lattice approximations in quantum mechanics and the construction of mutually unbiased bases in finite-dimensional quantum information theory.
\end{abstract}

\section{Introduction} 
In discrete harmonic analysis and lattice quantum field theory, the representation of states often relies on the discrete Fourier transform, which utilizes linear phase functions over finite cyclic groups. While sufficient for standard momentum-space representations, the restriction to linear phases limits the algebraic tools available for analyzing higher-order discrete dynamics, such as quantum walks and discrete diffusion processes.

This paper extends the standard discrete harmonic framework by investigating the representation theory of a generalized discrete algebra generated by rational polynomial phases. We define a formal algebraic structure over the integer lattice $\mathbb{Z}$ where the basis elements are discrete chirps of arbitrary polynomial degree. By restricting the polynomial coefficients to the rational field $\mathbb{Q}$, we ensure that every basis element possesses a well-defined finite period, allowing for exact algebraic manipulation without the convergence issues inherent in infinite-dimensional continuous spaces.

Crucially, we demonstrate that this discrete algebraic framework naturally accommodates an operational calculus for finite difference operators. The inversion of these operators over finite periodic lattices (Galerkin truncations) yields exact, non-singular lattice Green's functions. Finally, we formalize the transition to the continuum not through pointwise limits, but via the rigorous topological machinery of Approximately Finite-dimensional (AF) $C^*$-algebras, ensuring that the algebraic properties of the discrete lattice are preserved in the limit.

\section{The Rational Polynomial Phase Algebra}

We begin by defining the foundational algebraic space of discrete polynomial phases. Let $\mathbb{Q}[x]$ denote the ring of polynomials with rational coefficients. 

\begin{definition}[Rational Phase Sequence]
For any polynomial $P \in \mathbb{Q}[x]$, we define the associated rational phase sequence $\phi_P : \mathbb{Z} \to \mathbb{C}$ by:
\begin{equation}
\phi_P(n) = \exp\big(2\pi i P(n)\big)
\end{equation}
\end{definition}

Because $P(n)$ has rational coefficients, there exists a minimal integer $L_P \in \mathbb{Z}^+$ (the least common multiple of the denominators of the coefficients of $P$) such that $P(n + L_P) \equiv P(n) \pmod 1$ for all $n \in \mathbb{Z}$. Consequently, every rational phase sequence $\phi_P$ is strictly periodic with period $L_P$.

\begin{definition}[The Phase Algebra $\mathcal{A}_{\mathbb{Q}}$]
Let $\mathcal{A}_{\mathbb{Q}}$ be the complex vector space spanned by the set of all rational phase sequences:
\begin{equation}
\mathcal{A}_{\mathbb{Q}} = \text{span}_{\mathbb{C}} \big\{ \phi_P \;\big|\; P \in \mathbb{Q}[x] \big\}
\end{equation}
\end{definition}

\begin{proposition}
$\mathcal{A}_{\mathbb{Q}}$ forms a commutative, unital $\mathbb{C}$-algebra under pointwise multiplication.
\end{proposition}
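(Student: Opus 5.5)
The plan is to regard $\mathcal{A}_{\mathbb{Q}}$ as a linear subspace of the algebra $\mathbb{C}^{\mathbb{Z}}$ of all complex sequences on $\mathbb{Z}$. Under pointwise addition and multiplication, $\mathbb{C}^{\mathbb{Z}}$ is already a commutative, associative, unital $\mathbb{C}$-algebra, with unit the constant sequence $1$. It therefore suffices to show that $\mathcal{A}_{\mathbb{Q}}$ contains this unit and is closed under pointwise multiplication. Commutativity, associativity, distributivity and compatibility with scalars are then inherited from $\mathbb{C}^{\mathbb{Z}}$, because these identities hold pointwise in $\mathbb{C}$.

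\emph{Unit.} The zero polynomial $P = 0 \in \mathbb{Q}[x]$ gives $\phi_0(n) = \exp(0) = 1$ for every $n \in \mathbb{Z}$. Hence the unit of $\mathbb{C}^{\mathbb{Z}}$ lies in $\mathcal{A}_{\mathbb{Q}}$.

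\emph{Closure under products.} The key identity, for $P, Q \in \mathbb{Q}[x]$, is
\begin{equation*}
\phi_P(n)\,\phi_Q(n) = \exp\big(2\pi i P(n)\big)\exp\big(2\pi i Q(n)\big) = \exp\big(2\pi i (P+Q)(n)\big) = \phi_{P+Q}(n).
\end{equation*}
Since $\mathbb{Q}[x]$ is closed under addition, $P+Q \in \mathbb{Q}[x]$, and so the product of two generators is again a generator. For general elements $f = \sum_j a_j \phi_{P_j}$ and $g = \sum_k b_k \phi_{Q_k}$ (finite sums with $a_j, b_k \in \mathbb{C}$), bilinearity of pointwise multiplication gives
\begin{equation*}
fg = \sum_{j,k} a_j b_k\, \phi_{P_j + Q_k}.
\end{equation*}
This is a finite linear combination of rational phase sequences, so $fg \in \mathcal{A}_{\mathbb{Q}}$. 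This is a small additional remark rather than part of the claim, but the same identity shows that $P \mapsto \phi_P$ is a homomorphism from the additive group of $\mathbb{Q}[x]$ into the multiplicative group of $\mathbb{C}^{\mathbb{Z}}$. Consequently $\mathcal{A}_{\mathbb{Q}}$ is the image of the group algebra of $(\mathbb{Q}[x],+)$, and it is also closed under complex conjugation via $\overline{\phi_P} = \phi_{-P}$.

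No step here is a real obstacle; the only point that needs care is identifying the correct ambient algebra. The product must be the pointwise one, not convolution, since the spanning set is closed under the former. The algebra axioms should also be inherited from $\mathbb{C}^{\mathbb{Z}}$ rather than checked on formal symbols. This matters because distinct polynomials can give the same sequence: for example, $\phi_{x}$ and $\phi_0$ coincide on $\mathbb{Z}$. Working inside $\mathbb{C}^{\mathbb{Z}}$ avoids any well-definedness issue that such coincidences would otherwise raise.
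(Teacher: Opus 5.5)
Your proof is correct and follows the same route as the paper: the identity $\phi_P\phi_Q=\phi_{P+Q}$ together with $\phi_0=\mathbf{1}$. You also make explicit two steps the paper leaves implicit: extending closure from generators to finite linear combinations by bilinearity, and inheriting commutativity and the other algebra axioms from the ambient algebra $\mathbb{C}^{\mathbb{Z}}$.
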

\begin{proof}
Let $\phi_P, \phi_Q \in \mathcal{A}_{\mathbb{Q}}$. Pointwise multiplication yields:
\begin{equation}
\phi_P(n) \cdot \phi_Q(n) = \exp\big(2\pi i P(n)\big) \exp\big(2\pi i Q(n)\big) = \exp\big(2\pi i (P(n) + Q(n))\big) = \phi_{P+Q}(n)
\end{equation}
Since $\mathbb{Q}[x]$ is a ring, $P+Q \in \mathbb{Q}[x]$, implying $\phi_{P+Q} \in \mathcal{A}_{\mathbb{Q}}$. The constant polynomial $P(x) = 0$ provides the multiplicative identity $\mathbf{1}(n) = 1$.
\end{proof}

\section{Operational Calculus and Discrete Difference Operators}

To model physical dynamics on the lattice, we replace continuous differential operators with exact discrete difference operators acting on $\mathcal{A}_{\mathbb{Q}}$.

\begin{definition}[Forward Difference Operator]
The forward difference operator $\Delta : \mathcal{A}_{\mathbb{Q}} \to \mathcal{A}_{\mathbb{Q}}$ is defined by:
\begin{equation}
(\Delta f)(n) = f(n+1) - f(n)
\end{equation}
\end{definition}

When applied to a rational phase sequence, the difference operator acts purely algebraically on the polynomial argument.
\begin{lemma}
For any $P \in \mathbb{Q}[x]$, the action of $\Delta$ on $\phi_P$ is given by:
\begin{equation}
(\Delta \phi_P)(n) = \phi_P(n) \left[ \exp\big(2\pi i (\Delta P)(n)\big) - 1 \right]
\end{equation}
where $(\Delta P)(n) = P(n+1) - P(n)$ is a polynomial of degree strictly less than $\deg(P)$.
\end{lemma}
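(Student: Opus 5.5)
The identity is a direct computation from the definitions, and the degree claim follows from a binomial expansion of the leading term. For the identity I would apply the definition of $\Delta$ to $\phi_P$ and use the same exponential law as in the proof of the Proposition in the previous section:
\begin{equation*}
(\Delta\phi_P)(n) = \exp\big(2\pi i P(n+1)\big) - \exp\big(2\pi i P(n)\big) = \exp\big(2\pi i P(n)\big)\Big[\exp\big(2\pi i (P(n+1)-P(n))\big) - 1\Big].
\end{equation*}
Recognising the bracketed difference as $(\Delta P)(n)$ gives the displayed formula. I would also note that $\Delta P \in \mathbb{Q}[x]$. Consequently $\phi_{\Delta P} - \mathbf{1} \in \mathcal{A}_{\mathbb{Q}}$, and hence $\Delta\phi_P = \phi_{P+\Delta P} - \phi_P$ lies in $\mathcal{A}_{\mathbb{Q}}$, which confirms that $\Delta$ maps $\mathcal{A}_{\mathbb{Q}}$ into itself as the Definition asserts. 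Extending to all of $\mathcal{A}_{\mathbb{Q}}$ is immediate by linearity of $\Delta$.

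For the degree statement, write $P(x) = \sum_{k=0}^{d} a_k x^k$ with $a_d \neq 0$ and $d = \deg P \ge 1$. By the binomial theorem,
\begin{equation*}
(x+1)^k - x^k = \sum_{j=0}^{k-1} \binom{k}{j} x^j,
\end{equation*}
which has degree exactly $k-1$ with leading coefficient $k$. Summing over $k$ gives
\begin{equation*}
\Delta P(x) = d\,a_d\,x^{d-1} + (\text{terms of degree} < d-1),
\end{equation*}
so $\deg(\Delta P) = d-1 < d$. The coefficients remain rational because binomial coefficients are integers.

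The only delicate point is the degenerate case $\deg P = 0$, where the statement's strict inequality has to be read through a convention. Here $P$ is a constant $c$, so $\Delta P = 0$ and $\Delta\phi_P = 0$, consistent with the formula since $\exp(0) - 1 = 0$. The zero polynomial has degree $-\infty$ (or is undefined), so I would state explicitly that the strict inequality is interpreted with the convention $\deg 0 = -\infty$, or else restrict the degree claim to nonconstant $P$. With that convention, the main argument needs no further obstacles to be addressed.
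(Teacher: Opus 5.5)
Your argument is correct and follows the direct computation the paper takes for granted: the paper states this lemma without proof, and you supply the exponential law for the identity and the binomial expansion of the leading term for the degree drop. One small refinement: under the convention $\deg 0 = -\infty$, the zero polynomial $P = 0$ still fails the strict inequality (since $-\infty < -\infty$ is false), so your second option of restricting the degree claim to nonconstant $P$, where in fact $\deg(\Delta P) = \deg P - 1$, is the cleaner fix.
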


This property allows for the construction of an exact operational calculus. Linear difference equations with constant coefficients can be solved algebraically over finite periodic sub-lattices by inverting the corresponding polynomial in the shift operator $E$, where $Ef(n) = f(n+1)$. Because the lattice is finite and periodic (a Galerkin truncation), the spectrum of the difference operator is discrete and bounded, natively avoiding the ultraviolet divergences associated with continuous unbounded operators.

\section{Orthogonality and the Discrete Isometry}

A fundamental requirement for any state representation in quantum mechanics is the preservation of the inner product. We now prove the exact orthogonality of distinct rational phase sequences over their common period.

Let $P, Q \in \mathbb{Q}[x]$ such that $P - Q \notin \mathbb{Z}[x]$ (i.e., they do not differ merely by an integer-valued polynomial, which would render their phase sequences identical). Let $L$ be a common period for both $\phi_P$ and $\phi_Q$.

\begin{theorem}[Discrete Phase Isometry]
\label{thm:isometry}
The rational phase sequences are orthogonal over their common period $L$:
\begin{equation}
\langle \phi_P, \phi_Q \rangle_L = \frac{1}{L} \sum_{n=0}^{L-1} \phi_P(n) \overline{\phi_Q(n)} = \delta_{[P], [Q]}
\end{equation}
where $\delta_{[P], [Q]} = 1$ if $P - Q \in \mathbb{Z}[x]$, and $0$ otherwise.
\end{theorem}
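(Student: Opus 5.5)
The plan is to reduce the inner product to a single exponential sum. By the multiplicativity in the Proposition of Section~2, $\phi_P\overline{\phi_Q}=\phi_{P-Q}$. Setting $R=P-Q$, we get
\[
\langle \phi_P,\phi_Q\rangle_L=\frac1L\sum_{n=0}^{L-1}\exp\big(2\pi i R(n)\big).
\]
Since $L$ is a common period of $\phi_P$ and $\phi_Q$, it is also a period of $\phi_R$. So the sum is an average of $\phi_R$ over one full period.

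\textbf{Case $\phi_R\equiv 1$.} This holds when $R(n)\in\mathbb{Z}$ for all $n$. Then every term equals $1$ and the average is $1$.

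\textbf{Case $\deg R\le 1$.} Write $R(x)=a x+b$ with $a\in\mathbb{Q}$. If $a\notin\mathbb{Z}$, then $e^{2\pi i a}\neq 1$ and $e^{2\pi i aL}=1$ because $L$ is a period. The geometric series therefore gives
\[
\sum_{n=0}^{L-1} e^{2\pi i a n}=\frac{e^{2\pi i aL}-1}{e^{2\pi i a}-1}=0 .
\]
This is the standard DFT orthogonality.

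For higher degree I would try to iterate using the Lemma on $\Delta$, via the shift-invariance trick. Replacing $n$ by $n+1$ does not change a full-period average. Hence for the sum $S=\sum_{n=0}^{L-1}\phi_R(n)$ we have
\[
S=\sum_{n=0}^{L-1}\phi_R(n)\,e^{2\pi i(\Delta R)(n)} .
\]
The hope is to conclude $S=0$ by induction on degree.

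\textbf{Main obstacle.} I expect the induction to break, because the statement appears to be false as written. There are two separate problems.

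First, the criterion $P-Q\in\mathbb{Z}[x]$ is not the right equivalence. Take $R(x)=\tfrac{x^2+x}{2}$. Then $R\notin\mathbb{Z}[x]$, yet $R$ is integer-valued on $\mathbb{Z}$. So $\phi_R\equiv 1$ and the inner product is $1$, not $0$. The correct notion is "$R$ integer-valued", which by Pólya's theorem means $R$ lies in the $\mathbb{Z}$-span of the binomials $\binom{x}{k}$.

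Second, even with this correction, Gauss sums give nonzero values. With $R(x)=x^2/3$ and $L=3$:
\[
\sum_{n=0}^{2}e^{2\pi i n^2/3}=1+2e^{2\pi i/3},
\]
which has modulus $\sqrt3\neq 0$.

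So before writing any proof, I would check these two examples explicitly. I would then replace the theorem by a version that is actually provable. Exact orthogonality $\delta_{[P],[Q]}$ holds when $P-Q$ is affine modulo integer-valued polynomials, where the geometric-series argument above is complete. In general one gets only a bound: for quadratic $R$ with reduced leading denominator $q$ and $L=q$, $|\langle\phi_P,\phi_Q\rangle_L|\le q^{-1/2}$ (up to a factor $\sqrt 2$ when $q$ is even). This follows by squaring $|S|^2$ and applying the same shift/differencing step, which reduces $\deg R$ by one. Higher degrees need Weyl differencing, but that yields only decay, not an exact $\delta$.
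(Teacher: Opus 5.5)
Your refutation is correct. Both counterexamples check out, so the statement cannot be proved, and they show exactly where the paper's argument fails.

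The paper splits on $R=P-Q\in\mathbb{Z}[x]$ versus $R\notin\mathbb{Z}[x]$. In the second case it asserts that $n\mapsto e^{2\pi i R(n)}$ is a non-trivial character of $\mathbb{Z}/L\mathbb{Z}$, so its full sum vanishes. Both halves of this are wrong.

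\begin{itemize}
\item Your $\tfrac{x^2+x}{2}$ shows the split must be over integer-valued polynomials, not $\mathbb{Z}[x]$. The remark just before the theorem silently identifies the two.
\item The map $n\mapsto e^{2\pi i(R(n)-R(0))}$ is a homomorphism on $\mathbb{Z}/L\mathbb{Z}$ only if $R(n+m)-R(n)-R(m)+R(0)\in\mathbb{Z}$ for all $n,m$. That happens exactly when $R$ is affine modulo integer-valued polynomials. For $R=x^2/3$ this quantity equals $2/3$ at $n=m=1$, so character orthogonality does not apply. Your Gauss sum $1+2e^{2\pi i/3}=i\sqrt3$ is what actually occurs.
\end{itemize}

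The paper's argument is therefore valid only in the affine case, which you handle by the geometric series.

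Your proposed replacement needs two adjustments.

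First, your affine case skips $a\in\mathbb{Z}$, $b\notin\mathbb{Z}$. For $P=\tfrac12$, $Q=0$ one gets $\langle\phi_P,\phi_Q\rangle_L=-1$. This is the simplest counterexample to the paper's statement. It also contradicts your claim that an exact $\delta_{[P],[Q]}$ holds whenever $P-Q$ is affine modulo integer-valued polynomials. The true statement is this: if $P-Q=ax+b+I$ with $I$ integer-valued, then $\langle\phi_P,\phi_Q\rangle_L=e^{2\pi i b}$ when $a\in\mathbb{Z}$, and $0$ otherwise. To get a genuine Kronecker delta you must either normalize $P(0)=Q(0)=0$ or identify sequences that differ by a unimodular constant.

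Second, do not set $L=q$ in the quadratic bound, because $q$ need not be a period; for example, $x^2/3+x/2$ has minimal period $6$. Your differencing argument works for any common period $L$:
\begin{itemize}
\item Write $|S|^2=\sum_{h=0}^{L-1}\sum_{n=0}^{L-1}\phi_R(n+h)\overline{\phi_R(n)}$, and let the leading coefficient of $R$ be $c/q$ in lowest terms.
\item The inner sum is then a linear-phase sum with slope $2ch/q$, so it vanishes unless $q\mid 2h$.
\item Any period satisfies $q\mid 2L$. Hence there are $L/q$ such $h$ for odd $q$ and $2L/q$ for even $q$.
\end{itemize}
This gives $|\langle\phi_P,\phi_Q\rangle_L|\le q^{-1/2}$ for odd $q$ and $|\langle\phi_P,\phi_Q\rangle_L|\le(2/q)^{1/2}$ for even $q$.
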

\begin{proof}
Consider the inner product:
\begin{equation}
\langle \phi_P, \phi_Q \rangle_L = \frac{1}{L} \sum_{n=0}^{L-1} \exp\big(2\pi i (P(n) - Q(n))\big)
\end{equation}
Let $R(x) = P(x) - Q(x) \in \mathbb{Q}[x]$. If $R \in \mathbb{Z}[x]$, then $\exp(2\pi i R(n)) = 1$ for all $n \in \mathbb{Z}$, and the sum trivially evaluates to $1$. 

If $R \notin \mathbb{Z}[x]$, $R(n)$ generates a non-trivial cyclic subgroup of the roots of unity over the period $L$. By the standard properties of character sums over finite cyclic groups, the sum of a non-trivial character over the entire group evaluates to zero. Thus, the sequences are strictly orthogonal.
\end{proof}

This theorem establishes that the transformation from the discrete spatial lattice to the polynomial phase basis is a unitary isometry, preserving the $L^2$-norm of any finite-mode lattice state.

\section{The Continuum Limit via AF-Algebras}

To rigorously connect this discrete algebraic framework to continuous field theories, we must define a topology for the limit of infinite resolution. We achieve this using the theory of Approximately Finite-dimensional (AF) $C^*$-algebras.

Let $\{\Lambda_m\}_{m=1}^\infty$ be a sequence of finite periodic lattices where the period $L_m$ divides $L_{m+1}$. For each $m$, the space of functions on $\Lambda_m$ forms a finite-dimensional $C^*$-algebra $\mathcal{A}_m \cong \mathbb{C}^{L_m}$.

We define the injective *-homomorphism $\iota_m : \mathcal{A}_m \to \mathcal{A}_{m+1}$ by periodically extending the functions from $\Lambda_m$ to $\Lambda_{m+1}$. 

\begin{definition}[The AF-Algebra Limit]
The continuum limit of the rational phase algebra is defined as the $C^*$-algebraic inductive limit:
\begin{equation}
\mathcal{A}_\infty = \overline{\bigcup_{m=1}^\infty \mathcal{A}_m}
\end{equation}
where the closure is taken with respect to the unique $C^*$-norm induced by the embeddings $\iota_m$.
\end{definition}

By framing the continuum as an AF-algebra, we bypass the need for pointwise continuous coordinates ($\mathbb{R}$). The macroscopic observables of the field theory are rigorously defined as elements of $\mathcal{A}_\infty$, and the states (including quantum wavefunctions) are defined as positive linear functionals on this algebra. This provides a mathematically rigorous realization of relational, discrete spacetime models without sacrificing the analytical power of continuous operator algebras.

\section{Conclusion}
We have demonstrated that the algebra of rational polynomial phases provides a rigorous, self-contained framework for discrete harmonic analysis and lattice field theory. By replacing continuous differential equations with exact algebraic operations over finite periodic lattices, the framework natively regularizes high-frequency modes via Galerkin truncation. The transition to the continuum is rigorously managed via AF-algebras, offering a robust mathematical foundation for discrete approaches to quantum mechanics and quantum information theory.

\section{Acknowledgments}
The author explicitly acknowledges a collaborative research partnership with a generative AI agent in verifying the algebraic proofs and optimizing the \LaTeX\ documentation structures presented across this manuscript.

\end{document}